\theoremstyle{definition}
\newtheorem{definition}{Definition}
\newtheorem{example}{Example}
\newtheorem{remark}{Remark}
\theoremstyle{plain}
\newtheorem{theorem}{Theorem}
\newtheorem{lemma}[definition]{Lemma}
\newtheorem{corollary}{Corollary}
\theoremstyle{remark}
\newcommand{\K}{\mathbb{K}}
\newcommand{\N}{\mathbb{N}}
\newcommand{\Q}{\mathbb{Q}}
\newcommand{\R}{\mathbb{R}}
\newcommand{\Z}{\mathbb{Z}}
\newcommand{\Ccal}{\mathcal{C}}
\newcommand{\disc}{\operatorname{disc}}
\newcommand{\oh}{{\scriptstyle{{\cal O}}}}
\let\leq\leqslant
\begin{document}

\begin{center}
\begin{huge}
\begin{spacing}{1.0}
\textbf{Maximal Discrete Subgroups of $SO^+(2,n+2)$}  
\end{spacing}
\end{huge}

\bigskip
by
\bigskip

\begin{large}
\textbf{Aloys Krieg\footnote{Aloys Krieg, Lehrstuhl A für Mathematik, RWTH Aachen University, D-52056 Aachen, Germany, krieg@rwth-aachen.de}}
 and
\textbf{Felix Schaps\footnote{Felix Schaps, Lehrstuhl A für Mathematik, RWTH Aachen University, D-52056 Aachen, Germany, felix.schaps@matha.rwth-aachen.de}}
\end{large}
\vspace{0.5cm}\\
September 2021
\vspace{1cm}
\end{center}
\begin{abstract}
We characterize the maximal discrete subgroups of $SO^+(2,n+2)$, which contain the discriminant kernel of an even lattice, which contains two hyperbolic planes over $\Z$.  They coincide with the normalizers in $SO^+(2,n+2)$ and are given by the group of all integral matrices inside $SO^+(2,n+2)$, whenever the underlying lattice is maximal even. Finally we deal with the irreducible root lattices as examples.
\end{abstract}
\noindent\textbf{Keywords:} Special orthogonal group, discriminant kernel, normalizer, maximal discrete group, maximal even lattice  \\[1ex]
\noindent\textbf{Classification: 11F06, 11F55}
\vspace{2ex}\\

\newpage
\section{Introduction}

The Hermitian symmetric space associated with the special orthogonal group $SO(2,n+2)$ is a Siegel domain of type IV. The attached spaces of modular forms have attracted a lot of attention, mainly influenced by the product expansions of Borcherds (cf. \cite{Bo2}). Recently a lot of concrete examples for small $n$ have been constructed by Wang and Williams (cf. \cite{Wa21} - \cite{WW4}). The modular group consists of the discriminant kernel of an even lattice as well as certain congruence subgroups (cf. \cite{H-WK}).

Moreover the Maaß lift or additive lift has been described by Gritsenko (cf. \cite{G4}, \cite{K8}). In a recent paper by Wernz \cite{We2} the connection between different notions of Maaß spaces for $SO(2,4)$ has been reduced to modular forms for the discriminant kernel versus its maximal discrete extension.

In this paper we consider the case of general $n$. We determine the maximal discrete extension of the discriminant kernel and show that it is equal to the group of all integral matrices inside $SO^+(2,n+2)$, whenever we start with a maximal even lattice with two hyperbolic planes over $\Z$. In this case it also coincides with the normalizer. To a certain extent this characterizes the maximal even lattices among all even lattices. 

\section{Maximal Even Lattices}

We start with an even lattice $\Lambda$ in a $\Q$-vector space $V$ of dimension $n$ equipped with a non-degenerate symmetric bilinear form $\langle \cdot,\cdot \rangle$, i.e. $\Lambda$ is a free group of rank $n$ satisfying $\langle \lambda,\lambda\rangle \in 2\Z$ for all $\lambda \in \Lambda$. The dual lattice is given by 
\[
 \Lambda^\sharp:= \{v\in V;\; \langle v,\Lambda\rangle \subseteq \Z\} \supseteq \Lambda
\]
and $\Lambda^\sharp/\Lambda$ with the quadratic form
\[
 \overline{q}: \Lambda^\sharp/\Lambda \to \Q/\Z,\quad \lambda + \Lambda \mapsto \tfrac{1}{2} \langle \lambda,\lambda\rangle + \Z,
\]
is called the \emph{discriminant group} of $\Lambda$. The lattice $\Lambda$ is always contained in a maximal even lattice in $V$, which is a sublattice of $\Lambda^\sharp$ (cf. \cite{Kn}, 14.11).

Throughout the paper we choose a basis of a positive definite lattice $L$ with Gram matrix $S$. Let $\disc L:= \det S$ denote its discriminant. We add two hyperbolic planes over $\Z$, i.e.
\begin{gather*}\tag{1}\label{gl_1}
 \begin{cases}
  & L=\Z^n,\quad S\in\Z^{n\times n} \:\text{positive definite and even}, \\[1ex]
  & L_0 = \Z^{n+2}, \quad S_0 = \left(\begin{smallmatrix}
                                 0 & 0 & 1 \\ 0 & -S & 0 \\ 1 & 0 & 0
                                \end{smallmatrix}\right),   \\[2ex]
  & L_1 = \Z^{n+4},\quad S_1 = \left(\begin{smallmatrix}
                                 0 & 0 & 1 \\ 0 & S_0 & 0 \\ 1 & 0 & 0
                                \end{smallmatrix}\right). 
 \end{cases}
\end{gather*}
Thus $S_1$ has got the signature $(2,n+2)$.

We consider the attached special orthogonal group
\[
 SO(S_1;\R):=\{M\in SL(n+4;\R);\;M^{tr} S_1 M=S_1\}.
\]
Let $SO^+(S_1;\R)$ stand for the connected component of the identity matrix $I$. Due to (5) in \cite{K1}, it can be characterized 
by 
\[                                                                                                                                   \det (CP+D) > 0, \quad P= \begin{pmatrix}
                           0 & 1 \\ 1 & 0                                                                                                                                   
			  \end{pmatrix}, \quad                                                                                                                                                            
 M= \begin{pmatrix}
     \ast & \ast & \ast \\ C & \ast & D
    \end{pmatrix} \in SO(S_1;\R)
\]
with $2\times 2$ matrices $C,D$. Given $M\in SO^+(S_1;\R)$ we will always assume the form 
\begin{gather*}\tag{2}\label{gl_2}
 M = \begin{pmatrix}
      \alpha & a^{tr} S_0 & \beta \\ b & K & c \\ \gamma & d^{tr} S_0 & \delta
     \end{pmatrix},\; \alpha, \beta, \gamma, \delta \in\R, \;\; a,b,c,d\in\R^{n+2}, \; K\in \R^{(n+4)\times(n+4)}.
\end{gather*}
Its inverse is given by
\begin{gather*}\tag{3}\label{gl_3}
 M^{-1} = S_1^{-1} M^{tr} S_1 = \begin{pmatrix}
                                 \delta & c^{tr}S_0 & \beta \\ d & S_0^{-1} K^{tr} S_0 & a \\
                                 \gamma & b^{tr} S_0 & \alpha
                                \end{pmatrix}.
\end{gather*}
Let $\Gamma_S:=SO^+(S_1;\Z)$ denote the subgroup of integral matrices. Note that in this case $a,d\in \Z^{n+2}$ holds in \eqref{gl_2} due to $M^{-1}\in\Gamma_S$ and \eqref{gl_3}. Moreover we define the \emph{discriminant kernel} 
\[
 \widetilde{\Gamma}_S := \{M\in \Gamma_S;\; M\in I + \Z^{(n+4)\times (n+4)} S_1\},
\]
where $I$ is the identity matrix. The discriminant kernel induces the identity on $L^\sharp_1/L_1$, $L^\sharp_1 = S^{-1}_1 \Z^{n+4}$. We consider particular matrices in $\widetilde{\Gamma}_S$:
\begin{gather*}\tag{4}\label{gl_4}
 J= \begin{pmatrix}
     0 & 0 & -P \\ 0 & I & 0 \\ -P & 0 & 0
    \end{pmatrix}, \quad
 P= \begin{pmatrix}
     0 & 1 \\ 1 & 0
    \end{pmatrix},
\end{gather*}
\begin{gather*}\tag{5}\label{gl_5}
 T_\lambda = \begin{pmatrix}
              1 & -\lambda^{tr} S_0 & -\tfrac{1}{2} \lambda^{tr} S_0 \lambda \\ 0 & I & \lambda \\ 0 & 0 & 1 
             \end{pmatrix}, \quad
 T^*_\lambda = \begin{pmatrix}
               1 & 0 & 0 \\ \lambda & I & 0 \\ -\tfrac{1}{2} \lambda^{tr} S_0 \lambda & -\lambda^{tr} S_0 & 1
              \end{pmatrix},\;\;
 \lambda \in \Z^{n+2}.
\end{gather*}
At first we give a description of the first columns of matrices in $\Gamma_S$.

\begin{theorem}\label{theorem_1} 
 Let $L_1=\Z^{n+4}$ satisfy \eqref{gl_1}. Given $h\in\Z^{n+4}$ the following assertions are equivalent:
 \begin{enumerate}
  \item[(i)] $h$ is the first column of a matrix in $\Gamma_S$ (resp. $\widetilde{\Gamma}_S$).
  \item[(ii)] $h^{tr}S_1$ is the last row of a matrix in $\Gamma_S$ (resp. $\widetilde{\Gamma}_S$).
  \item[(iii)] $h^{tr} S_1 h=0$ and $\gcd(S_1 h)=1$.
 \end{enumerate}
\end{theorem}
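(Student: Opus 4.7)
The plan is to prove (i) $\Leftrightarrow$ (ii) directly from the inverse formula \eqref{gl_3}, (i) $\Rightarrow$ (iii) from the defining orthogonality relation together with integrality, and the substantive implication (iii) $\Rightarrow$ (i) via an Eichler-style transitivity argument using the generators from \eqref{gl_4}--\eqref{gl_5}.

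For (i) $\Leftrightarrow$ (ii), write $M \in \Gamma_S$ in the form \eqref{gl_2} with first column $h = (\alpha, b, \gamma)^{tr}$. By \eqref{gl_3}, the last row of $M^{-1}$ equals $(\gamma, b^{tr}S_0, \alpha) = h^{tr}S_1$. Since both $\Gamma_S$ and $\widetilde{\Gamma}_S$ are closed under inversion, the bijection $M \leftrightarrow M^{-1}$ matches matrices with first column $h$ to matrices with last row $h^{tr}S_1$ in either setting. For (i) $\Rightarrow$ (iii): the $(1,1)$-entry of $M^{tr}S_1M = S_1$ forces $h^{tr}S_1 h = 0$, while $\gcd(S_1 h) = 1$ follows by cofactor expansion along the last row of the integer matrix $M^{-1}$ of determinant $1$.

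The main work is (iii) $\Rightarrow$ (i). Since $\widetilde{\Gamma}_S \subseteq \Gamma_S$ and $e_1$ is itself primitive isotropic (with $S_1 e_1 = e_{n+4}$) and the first column of $I \in \widetilde{\Gamma}_S$, it suffices to prove that $\widetilde{\Gamma}_S$ acts transitively on primitive isotropic vectors of $L_1 = \Z^{n+4}$. I would establish transitivity by an explicit reduction of a general such $h$ to $e_1$ using the generators $J, T_\lambda, T^*_\lambda$ (and their analogues for the inner hyperbolic plane inside $L_0$, which likewise lie in $\widetilde{\Gamma}_S$). Writing $h = (h_1, \tilde{h}, h_{n+4})^{tr}$ with $\tilde{h} \in \Z^{n+2}$, a direct computation gives
\[
 T_\lambda h = \bigl(h_1 - \lambda^{tr}S_0 \tilde{h} - \tfrac{1}{2} h_{n+4}\, \lambda^{tr} S_0 \lambda,\; \tilde{h} + h_{n+4} \lambda,\; h_{n+4}\bigr)^{tr},
\]
and a symmetric identity for $T^*_\lambda$ with $h_1$ and $h_{n+4}$ swapped, while $J$ permutes the outer coordinates up to sign. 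Thus $T_\lambda$ shifts $\tilde{h}$ by $h_{n+4}\Z^{n+2}$ and $T^*_\lambda$ shifts it by $h_1 \Z^{n+2}$, giving a Bezout descent on $(h_1, h_{n+4})$ coupled with componentwise reductions of $\tilde{h}$; the isotropy relation $2 h_1 h_{n+4} + \tilde{h}^{tr} S_0 \tilde{h} = 0$ together with the primitivity $\gcd(S_1 h) = 1$ forces termination at a unit multiple of $e_1$ or of $e_{n+4}$, the latter being carried to the former by $J$ and the sign being corrected by a diagonal element such as $\diag(-1, I, -1) \in \widetilde{\Gamma}_S$. The degenerate case $h_1 = h_{n+4} = 0$ is escaped by applying an analogue of $T_\lambda$ coming from the second hyperbolic plane inside $L_0$, using that $\tilde{h} \neq 0$ is primitive isotropic in $L_0$.

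The main obstacle is organising the descent so that a natural measure (e.g.\ $|h_1| + |h_{n+4}|$ together with the size of $\tilde h$) strictly decreases at every step, and confirming that all intermediate matrices lie in $\widetilde{\Gamma}_S$ rather than merely in $\Gamma_S$. The hypothesis \eqref{gl_1} of two hyperbolic planes over $\Z$ is decisive here: the outer one furnishes the generators $J, T_\lambda, T^*_\lambda$ used throughout the reduction, while the inner one provides the room to break the stalemate $h_1 = h_{n+4} = 0$.
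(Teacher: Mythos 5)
Your route is the same as the paper's: (i) $\Leftrightarrow$ (ii) via the inverse formula \eqref{gl_3}, (i) $\Rightarrow$ (iii) from $M^{tr}S_1M=S_1$ together with $\det M^{-1}=1$, and (iii) $\Rightarrow$ (i) by reducing $h$ to $e_1$ with the generators $J$, $T_\lambda$, $T_\lambda^*$ — the paper simply outsources this last reduction to the proof of Theorem 1 in \cite{K1}. Two points in your sketch need repair. First, the statement you reduce to is too strong as phrased: $\widetilde{\Gamma}_S$ does \emph{not} act transitively on all primitive isotropic vectors of $\Z^{n+4}$. By your own implication (i) $\Rightarrow$ (iii), the orbit of $e_1$ consists only of those $h$ with $\gcd(S_1h)=1$, and when $\det S$ is not a unit there exist primitive isotropic $h$ with $\gcd(S_1h)>1$ (for $n=1$, $S=(8)$ take $h=(0,2,1,2,0)^{tr}$, which is isotropic with $\gcd(h)=1$ but $S_1h=(0,2,-8,2,0)^{tr}$). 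So the descent must be run on the set cut out by condition (iii), and it is precisely $\gcd(S_1h)=1$ — not $\gcd(h)=1$ — that forces termination at $\pm e_1$ or $\pm e_{n+4}$; you do invoke this at the end, but the opening claim should match. Second, your sign-correcting element $\diag(-1,I,-1)$ does not lie in $SO^+(S_1;\R)$: for it $C=0$ and $D=\diag(1,-1)$, so $\det(CP+D)=-1<0$. A correct substitute is $N=\diag(-1,-1,I^{(n)},-1,-1)$, which negates both hyperbolic coordinate pairs; one checks $\det(CP+D)=1$ and $(N-I)S_1^{-1}\in\Z^{(n+4)\times(n+4)}$, so $N\in\widetilde{\Gamma}_S$ and $N(-e_1)=e_1$. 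With these two repairs, and granting the bookkeeping of the descent that you explicitly leave open, your argument is the one the paper intends.
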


\begin{proof}
 (i) $\Leftrightarrow$ (ii) ~Use \eqref{gl_2} and \eqref{gl_3}. \\[1ex]
 (i) $\Rightarrow$ (iii) ~Apply $M^{tr} S_1 M= S_1$ and $\widetilde{\Gamma}_S\subseteq \Gamma_S \subseteq SL(n+4;\Z)$.  \\[1ex]
 (iii) $\Rightarrow$ (ii) ~Proceed in the same way as in the proof of Theorem \ref{theorem_1} in \cite{K1}. The matrices involved there lead to an $M\in \widetilde{\Gamma}_S$ such that
 \[
  h^{tr} S_1 M = (0,\ldots,0,1).
 \]
Hence $h^{tr} S_1$ is the last row of $M^{-1} \in \widetilde{\Gamma}_S \subseteq \Gamma_S$.
\end{proof}

In the context of the action of $\Gamma_S$ on the orthogonal half-space (cf. \cite{G4}), it makes sense to consider cusps. If $\Gamma$ is a subgroup of $\Gamma_S$ of finite index, we denote by
\[
 \Ccal^0(\Gamma):=\bigl\{\Gamma h;\; h\in L^\sharp_1 = S^{-1}_1 \Z^{n+4},\,h^{tr} S_1 h=0,\, \gcd(S_1 h) = 1\bigr\}
\]
the set of $\Gamma$-orbits of \emph{zero-dimensional cusps} (cf. \cite{GHS}).

\begin{corollary}\label{corollary_1} 
 Let $L_1 = \Z^{n+4}$ satisfy \eqref{gl_1}. Then the following assertions are equivalent:
 \begin{enumerate}
  \item[(i)] $L=\Z^n$ is maximal even.
  \item[(ii)] Every $g\in L^\sharp_1 = S^{-1}_1 \Z^{n+4}$ with $g^{tr} S_1 g=0$ fulfills $g\in L_1 = \Z^{n+4}$.
  \item[(iii)] $\sharp \Ccal^0(\Gamma_S) = 1$. 
  \item[(iv)] $\sharp \Ccal^0(\widetilde{\Gamma}_S) = 1$. 
 \end{enumerate}
\end{corollary}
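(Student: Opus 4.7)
The plan is to close the cycle (i)$\Leftrightarrow$(ii)$\Rightarrow$(iv)$\Rightarrow$(iii)$\Rightarrow$(ii), with Theorem \ref{theorem_1} serving as the bridge between the lattice-theoretic statements and the cusp-counting ones. The running tool is the block structure from \eqref{gl_1}: any $v\in L_1^\sharp$ has the form $v=(a,b_1,w,b_2,c)^{tr}$ with $a,b_1,b_2,c\in\Z$ and $w\in L^\sharp$, belongs to $L_1$ precisely when in addition $w\in L$, and satisfies $v^{tr}S_1 v=2ac+2b_1b_2-w^{tr}Sw$.

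For (i)$\Rightarrow$(ii), given an isotropic $v\in L_1^\sharp$, one reads off $w^{tr}Sw=2(ac+b_1b_2)\in 2\Z$, so maximal evenness of $L$ forces $w\in L$ and hence $v\in L_1$. Conversely, for (ii)$\Rightarrow$(i), given $w\in L^\sharp$ with $\tfrac{1}{2}w^{tr}Sw=k\in\Z$, the explicit vector $v=(k,0,w,0,1)^{tr}$ lies in $L_1^\sharp$, is isotropic by the formula above, and so belongs to $L_1$ by (ii), giving $w\in L$. Next, (ii)$\Rightarrow$(iv) is a direct consequence of Theorem \ref{theorem_1}: any primitive isotropic $h\in L_1^\sharp$ lies in $L_1=\Z^{n+4}$ by (ii) and is therefore the first column of some $M_h\in\widetilde{\Gamma}_S$; comparing with the standard cusp $e_1$ (the first column of $I$), any two such $h,h'$ satisfy $h'=M_{h'}M_h^{-1}h$, so only a single $\widetilde{\Gamma}_S$-orbit occurs. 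The implication (iv)$\Rightarrow$(iii) is immediate from $\widetilde{\Gamma}_S\subseteq\Gamma_S$.

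The subtlest step is (iii)$\Rightarrow$(ii). For a nonzero isotropic $g\in L_1^\sharp$, set $d:=\gcd(S_1 g)$ and $g_0:=g/d$; then $S_1 g_0$ is a primitive vector in $\Z^{n+4}$ and $g_0$ remains isotropic, so $g_0$ represents a class in $\Ccal^0(\Gamma_S)$. By (iii) this class coincides with that of $e_1$, so $g_0=Me_1$ for some $M\in\Gamma_S\subseteq SL(n+4;\Z)$; since $\Gamma_S$ stabilises $L_1=\Z^{n+4}$, this forces $g_0\in L_1$ and hence $g=dg_0\in L_1$. The main obstacle is precisely this primitivity reduction, which is where the integrality of $\Gamma_S$ (rather than merely the rational form preservation) is genuinely used; everything else is routine tracking of the block decomposition and Theorem \ref{theorem_1}.
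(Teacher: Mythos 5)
Your proposal is correct and takes essentially the same approach as the paper: the same block decomposition of vectors in $L_1^\sharp$ with the quadratic-form identity $v^{tr}S_1v=2ac+2b_1b_2-w^{tr}Sw$, the same appeal to Theorem \ref{theorem_1} to realize primitive isotropic integral vectors as first columns of matrices in $\widetilde{\Gamma}_S$, and the same transitivity argument for (iii) $\Rightarrow$ (ii). The only differences are organizational --- you factor the equivalences as (i)$\Leftrightarrow$(ii) plus the cycle (ii)$\Rightarrow$(iv)$\Rightarrow$(iii)$\Rightarrow$(ii), and you make explicit the reduction by $d=\gcd(S_1g)$ that the paper leaves implicit.
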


\begin{proof}
 (i) $\Rightarrow$ (iv) ~Let
 \[
  g= (g_1,g_2,\lambda_1,\ldots,\lambda_n,g_3,g_4)^{tr}\in L^\sharp_1, g^{tr} S_1 g = 0,\;\; \gcd(S_1 g) = 1.
 \]
Thus $\lambda = (\lambda_1,\ldots,\lambda_n)^{tr}\in L^\sharp$ follows with
\[
 \lambda^{tr} S\lambda = 2(g_1 g_4 + g_2 g_3) \in 2\Z.
\]
Hence $L+\Z \lambda$ is an even overlattice of $L$ and (i) yields $\lambda \in L$, i.e. $g\in L_1$. Then Theorem \ref{theorem_1} leads to (iv). \\[1ex]
(iv) $\Rightarrow$ (iii) ~This is clear due to $\widetilde{\Gamma}_S \subseteq \Gamma_S$. \\[1ex]
(iii) $\Rightarrow$ (ii) ~$\Gamma_S$ acts transitively on the set of vectors $g\in L^\sharp_1$ with $g^{tr} S_1 g = 0$ and $\gcd(S_1 g)=1$. In view of $\Gamma_S\subseteq SL(n+4;\Z)$ any $M\in \Gamma_S$ induces a bijective map  $M:\Z^{n+4} \to \Z^{n+4}$, $h\mapsto Mh$. Hence $L^\sharp_1 = L_1$ follows. \\[1ex]  
(ii) $\Rightarrow$ (i) ~This is clear as any even overlattice of $L$ is contained in $L^\sharp$ and 
\[
 \{ \lambda \in L^\#; \lambda^{tr} S \lambda \in 2 \Z \} =  L
\]
Hence $L$ is maximal even.
\end{proof}
Corollary \ref{corollary_1} says that $L$ is maximal even if and only if $(L^\sharp/L,\overline{q})$ is anisotropic. The equivalence between (i) and (ii) is contained in \cite{Ni}, Proposition 1.4.1, under weaker assumptions.
\vspace{1ex}

We give some examples.

\begin{example}\label{example_1} 
 a) Considering $L=\Z$ with $\langle x,y\rangle = 2Nxy$, $N\in \N$, we obtain a maximal even lattice if and only if $N$ is squarefree. This leads to paramodular groups (cf. \cite{GKr}).  \\[1ex]
 b) If $L=\oh_\K$ is the ring of integers of an imaginary quadratic number field $\K$ with $\langle x,y\rangle = x\overline{y} + \overline{x} y$, we are led to the Hermitian modular group (cf. \cite{KRaW}).\\[1ex]
 c) Considering the Hurwitz quaternions or the order of integral Cayley numbers, confer \cite{H-WK} and \cite{DKW}. These cases refer to the root lattices $D_4$ and $E_8$ (cf. sect. 4).
 \end{example}
 
 The case of non-maximal lattices is dealt with in the following Remark.
 
 \begin{remark}\label{Remark_1} 
 An arbitrary even lattice $L$ is contained in a maximal even lattice $L^*$ with Gram matrix $S^*$. Hence there exists a matrix $H\in \Z^{n\times n}$ satisfying
\[
 S = H^{tr} S^* H, \;\; S_1 = \widehat{H}^{tr} S^*_1 \widehat{H},\;\; \widehat{H} = 
 \begin{pmatrix}
  I & 0 & 0 \\ 0 & H & 0 \\ 0 & 0 & I
 \end{pmatrix}.
\]
Clearly $|\det H| = [L^*:L]$ holds. In this case we have
 \[
  \widetilde{\Gamma}_S \subseteq \widehat{H}^{-1} \widetilde{\Gamma}_{S^*} \widehat{H} \subseteq \widehat{H}^{-1} \Gamma_{S^*} \widehat{H} \subseteq SO^+(S_1;\Q).
 \]
 Thus $L$ is maximal even, whenever $\det S$ is squarefree. If $n$ is odd, then $\det S$ and $\det S^*$ are even. Thus $L$ of odd rank is maximal even, whenever $(\det S)/2$ is squarefree.
\end{remark}

We give an application to right and double cosets, which is also needed in the attached Hecke theory.

\begin{theorem}\label{theorem_2} 
Let $L_1 = \Z^{n+4}$ satisfy \eqref{gl_1} and 
 \begin{gather*}\tag{6}\label{gl_6}
  R' = \tfrac{1}{\sqrt{r}} R \in SO^+(S_1;\R), \; r\in\N, \; R\in \Z^{(n+4)\times(n+4)}.
 \end{gather*}
If $L$ is maximal even or $r$ and $\det S$ are coprime, the following holds. \\[1ex]
a) The right coset $\widetilde{\Gamma}_S R$ contains a matrix
 \begin{gather*}\tag{7}\label{gl_7} 
  \begin{pmatrix}
   \alpha & \ast & \ast \\ 0 & \ast & \ast \\ 0 & 0 & \delta
  \end{pmatrix}, \;\; \alpha,\delta \in \N,\; \alpha \delta = r,
 \end{gather*}
where $\alpha$ is the $\gcd$ of the first column of $R$.  \\[1ex]
b) The double coset $\widetilde{\Gamma}_S R \widetilde{\Gamma}_S$ contains a matrix 
 \begin{gather*}\tag{8}\label{gl_8} 
   R^* =  \begin{pmatrix}
\alpha^* & 0 & 0 \\ 0 & K^* & 0 \\ 0 & 0 & \delta^*
  \end{pmatrix} \in\Z^{(n+4)\times(n+4)},  \;\; \alpha^*\in\N,\; \alpha^*\delta^* = r,\; \tfrac{1}{\alpha^*}R^*\in\Z^{(n+4)\times(n+4)},
 \end{gather*}
where $\alpha^*$ is the $\gcd$ of all the entries of $R$.
\end{theorem}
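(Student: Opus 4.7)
The plan is to apply Theorem \ref{theorem_1} to the normalized first column of $R$. Setting $h := R e_1$, $\alpha := \gcd(h)$, and $h_0 := h/\alpha$, the identity $R^{tr} S_1 R = r S_1$ obtained from \eqref{gl_6} yields $h_0^{tr} S_1 h_0 = 0$; evaluating the same identity on $e_1$ shows $\alpha \mid r$ and, with $\delta := r/\alpha \in \N$, produces the useful relation $R^{tr}(S_1 h_0) = \delta e_{n+4}$. The crux is the subclaim $\gcd(S_1 h_0) = 1$, needed to invoke Theorem \ref{theorem_1}. Writing $d := \gcd(S_1 h_0)$ and $g := h_0/d \in L_1^\sharp$, one has $g^{tr} S_1 g = 0$. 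If $L$ is maximal even, Corollary \ref{corollary_1} forces $g \in L_1$ and hence $d = 1$. If instead $\gcd(r,\det S) = 1$, I would combine two divisibilities: $d \mid \delta \mid r$ (dividing the relation $R^{tr}(S_1 h_0) = \delta e_{n+4}$ through by $d$) and $d \mid \det S = |L_1^\sharp/L_1|$ (since $\gcd(h_0) = 1$ forces the coset $g + L_1$ to have order exactly $d$ in the discriminant group). Together these give $d = 1$. Once Theorem \ref{theorem_1} yields $M_0 \in \widetilde{\Gamma}_S$ with first column $h_0$, the matrix $M_0^{-1} R$ has first column $\alpha e_1$; the identity $R^{tr} S_1 R = r S_1$ applied to $e_1$ then forces the last row of $M_0^{-1} R$ to be $\delta e_{n+4}^{tr}$, which is form \eqref{gl_7}.

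\textbf{Part (b).} Starting from $R_1 := M_0^{-1} R$ in form \eqref{gl_7}, I would apply part (a) a second time, now to $\widetilde R := J R_1 J$. Since $J \in \widetilde{\Gamma}_S$ is integer and $J^{tr} S_1 J = S_1$, the matrix $\widetilde R$ again satisfies $\widetilde R^{tr} S_1 \widetilde R = r S_1$ with integer entries, so part (a) produces $M_1 \in \widetilde{\Gamma}_S$ with $M_1^{-1} \widetilde R$ of form \eqref{gl_7}. Conjugating back through $J$, this provides a second representative of the double coset which is block \emph{lower} triangular in the $1 + (n+2) + 1$ decomposition. Combining the two reductions, I would produce $M_L, M_R \in \widetilde{\Gamma}_S$ for which $M_L R M_R$ is simultaneously block upper and block lower triangular, hence block diagonal of the form \eqref{gl_8}. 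The central block inherits $(K^*)^{tr} S_0 K^* = r S_0$ from the orthogonality constraint, and $\alpha^* \delta^* = r$ follows by comparing diagonal entries. The identification $\alpha^* = \gcd$ of all entries of $R$ follows from the invariance of the entrywise gcd under left and right multiplication by elements of $GL(n+4;\Z)$; and since this gcd, computed on the diagonal matrix $R^*$, equals $\gcd(\alpha^*, K^*, \delta^*)$, every entry of $R^*$ is divisible by $\alpha^*$, i.e., $\tfrac{1}{\alpha^*} R^* \in \Z^{(n+4)\times(n+4)}$.

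\textbf{Main obstacle.} The hardest step in (a) is the coprime-case proof of the subclaim $\gcd(S_1 h_0) = 1$, which requires combining the arithmetic divisibility from $R^{tr} S_1 R = r S_1$ (bounding $d$ by divisors of $r$) with the group-theoretic structure of the discriminant group (bounding $d$ by divisors of $\det S$). In (b), the analogous subtle point is to verify that the two triangular reductions --- from (a) applied to $R$ and to $J R_1 J$ --- can actually be combined into a \emph{single} block-diagonal representative of $\widetilde{\Gamma}_S R \widetilde{\Gamma}_S$; this requires careful bookkeeping of how left and right multiplications by the unipotent generators $T_\lambda$, $T^*_\lambda$ and the involution $J$ interact to clear the $u$, $v$, $w$ blocks of form \eqref{gl_7} simultaneously.
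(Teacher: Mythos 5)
Your part (a) is essentially correct and follows the paper's route: normalize the first column $h$ of $R$ by its gcd, verify condition (iii) of Theorem \ref{theorem_1} for $h_0$, and read off the last row of $M_0^{-1}R$ from \eqref{gl_3}. Your handling of the coprime case (bounding $d=\gcd(S_1h_0)$ simultaneously by divisors of $r$ via $R^{tr}(S_1h_0)=\delta e_{n+4}$ and by divisors of $\det S$ via the order of $g+L_1$ in the discriminant group) is cleaner and more explicit than the paper's determinant argument, and I see no gap there.

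Part (b), however, has a genuine gap at precisely the point you dismiss as bookkeeping. First, the merging step cannot work as described: starting from $R_1$ of form \eqref{gl_7}, right multiplication by $T_\mu$ replaces the first-row block $a^{tr}S_0$ by $(a-\alpha\mu)^{tr}S_0$, so you can only clear the first row if $\alpha\mid a$, and this divisibility is \emph{not} automatic for an arbitrary representative. Worse, your construction would produce a block-diagonal matrix with corner entry $\gcd(Re_1)$, whereas the theorem asserts the corner entry is the gcd of \emph{all} entries of $R$; these genuinely differ (e.g.\ $JR^*J$ is block diagonal with corner $\delta^*$, so the double coset contains matrices whose first-column gcd strictly exceeds the entrywise gcd), and for such a representative the assertion $\tfrac{1}{\alpha^*}R^*\in\Z^{(n+4)\times(n+4)}$ would fail. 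The missing idea is the paper's descent: choose $\alpha^*$ \emph{minimal} among the positive $(1,1)$-entries occurring in the whole double coset; then part (a), applied to that representative and to suitable $J$-twists of it, forces $\alpha^*$ to equal the gcd of the first column and to divide the relevant first-row entries, which is exactly what makes the clearing by $T^*_\lambda$ and $T_\mu$ legitimate. Second, your final step is a non sequitur: from $\gcd(\text{entries of }R)=\gcd(\alpha^*,K^*,\delta^*)$ you can only conclude that the entrywise gcd \emph{divides} $\alpha^*$, not that $\alpha^*$ divides every entry of $K^*$ and $\delta^*$. The paper needs a separate argument here, considering $R^*JT_\lambda$ for $\lambda=e_1,\dots,e_{n+2},e_1+e_{n+2}$ and invoking the minimality of $\alpha^*$ once more, to establish $\alpha^*\mid K^*$ and $\alpha^*\mid\delta^*$ and hence $\alpha^*=\gcd(\text{entries of }R)$.
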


\begin{proof}
 a) Let $g$ be the first column of $R$, which satisfies $g^{tr} S_1 g = 0$ and let $\alpha = \gcd(S_1 g)$. \\[1ex]
 (i) ~If $L$ is maximal even, we have
 \[
  \tfrac{1}{\alpha} g\in S^{-1}_1 \Z^{n+4} = L^\sharp_1 \;\;\text{and}\;\; \tfrac{1}{\alpha} g\in L_1
 \]
due to Corollary \ref{corollary_1}. \\[1ex]
(ii) ~If $r$ and $\det S$ are coprime, we observe that $g^{tr}S_1$ is the last row of the matrix $\sqrt{r} R^{*-1}$ due to \eqref{gl_3}, which has the determinant $r^{2+n/2}$. As $\alpha$ divides a power of $r$ it is coprime to $\det S$ and we get again $\frac{1}{\alpha} g \in L_1$.   \\
As $\frac{1}{\alpha} g$ has coprime entries we conclude from Theorem \ref{theorem_1} that it is the first column of a matrix $M\in \widetilde{\Gamma}_S$ in both cases. Therefore $(\alpha,0,\ldots,0)^{tr}$ is the first column of $M^{-1} R$. \\
As block diagonal matrices form a subgroup, we obtain \eqref{gl_7} from the description of the inverse in \eqref{gl_3}. \\[1ex]
b) Let $\alpha^*$ be the smallest positive $(1,1)$-entry in $\Z$ of all the matrices in $\widetilde{\Gamma}_S R \widetilde{\Gamma}_S$ and assume 
\[
 R = \begin{pmatrix}
      \alpha^* & \ast \\ \ast & \ast
     \end{pmatrix}
\]
without restriction. It follows from a) that $\alpha^*$ divides the entries of the first column of $R$. The same procedure as in a) applied to $M^{-1}J$ shows that $\alpha^*$ also divides the entries of the first row of $R S^{-1}_1$. Multiplication by $T^*_\lambda$, $\lambda\in\Z^{n+2}$, from the left and by $T_\mu$, $\mu\in\Z^{n+2}$, from the right (cf. \eqref{gl_5}) leads to 
\[
  R^* = \begin{pmatrix}
   \alpha^* & 0 & 0 \\ 0 & K^* & 0 \\ 0 & 0 & \delta^*
  \end{pmatrix} \in \widetilde{\Gamma}_S R \widetilde{\Gamma}_S, \;\; \alpha^* \delta^* = r.
\]
Considering $R^* JT_\lambda $, $\lambda=e_1,\ldots, e_{n+2},e_1+e_{n+2} \in \Z^{n+2}$, then shows that $\alpha^*$ divides the entries of $K^*$ and $\delta^*$. Now $\widetilde{\Gamma}_S \subseteq SL(n+4;\Z)$ implies that $\alpha^*$ is the $\gcd$ of the entries of $R$.
\end{proof}

\section{Maximal Discrete Subgroups}

We follow the procedure by Ramanathan \cite{Ra}.

\begin{lemma}\label{lemma_1} 
 Let $L_1 = \Z^{n+4}$ satisfy \eqref{gl_1}. Let $\Delta$ be a discrete subgroup of $SO^+(S_1;\R)$, which contains $\widetilde{\Gamma}_S$. Then the following holds  \\[1ex]
 a) $[\Delta:\widetilde{\Gamma}_S] = j\in\N$.  \\[1ex]
 b) Given $R\in \Delta$, there exists $r\in \N$ such that
 \[
  \sqrt{r} R \in \Z^{(n+4)\times (n+4)}.
 \]
\end{lemma}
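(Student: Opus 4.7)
Following the strategy of Ramanathan \cite{Ra}, I would prove (a) by a covolume comparison. The discriminant kernel $\widetilde{\Gamma}_S$ is an arithmetic lattice in the semisimple Lie group $SO^+(S_1;\R)$ and therefore has finite Haar covolume by Borel--Harish-Chandra reduction theory. Taking any fundamental domain $F$ for $\Delta$ in $SO^+(S_1;\R)$ and translating it by a complete system of right coset representatives of $\widetilde{\Gamma}_S$ in $\Delta$ yields a fundamental domain for $\widetilde{\Gamma}_S$, whence
\[
 \operatorname{vol}\bigl(\widetilde{\Gamma}_S\backslash SO^+(S_1;\R)\bigr) = [\Delta:\widetilde{\Gamma}_S]\cdot \operatorname{vol}\bigl(\Delta\backslash SO^+(S_1;\R)\bigr).
\]
Since $\Delta$ is discrete, the right-hand covolume is strictly positive, so the finiteness of the left side forces $[\Delta:\widetilde{\Gamma}_S]\in\N$.

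For (b), the starting point is to pass to the normal core $N:=\bigcap_{\delta\in\Delta}\delta\widetilde{\Gamma}_S\delta^{-1}$. By (a) only finitely many distinct conjugates occur, so $N$ is a normal subgroup of $\Delta$ of finite index in $\widetilde{\Gamma}_S$; in particular $RNR^{-1}=N\subseteq\mathrm{SL}(n+4;\Z)$ for every $R\in\Delta$. Because $N$ has finite index in $\widetilde{\Gamma}_S$, there exists $m\in\N$ such that $T_{m\lambda},T^*_{m\lambda}\in N$ for every $\lambda\in\Z^{n+2}$, and therefore the conjugates $RT_{m\lambda}R^{-1}$ and $RT^*_{m\lambda}R^{-1}$ are integer matrices.

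The matrices $T_\mu-I$ are nilpotent of index at most three by \eqref{gl_5}, so $\log T_\mu=(T_\mu-I)-\tfrac{1}{2}(T_\mu-I)^2$ is a polynomial in $T_\mu$. Transferring the integrality through this formula yields $RY_\lambda R^{-1},\,RY^*_\lambda R^{-1}\in\Q^{(n+4)\times(n+4)}$ for every $\lambda\in\Q^{n+2}$, where $Y_\lambda:=\log T_\lambda$ and $Y^*_\lambda:=\log T^*_\lambda$ are $\Q$-linear in $\lambda$. A direct computation shows that $Y_\lambda$, $Y^*_\lambda$ together with the Lie brackets $[Y_\mu,Y^*_\nu]$ span the rational Lie algebra $\mathfrak{so}(S_1;\Q)$, so $\operatorname{Ad}(R)$ preserves it as a $\Q$-subspace of $\mathfrak{gl}(n+4;\R)$.

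The main obstacle is deducing rationality of $R$ itself from this. Here I would use that the standard representation of $\mathfrak{so}(S_1)$ on $\R^{n+4}$ is absolutely irreducible, which via Schur's lemma identifies the normaliser of $\mathfrak{so}(S_1;\Q)$ in $\operatorname{GL}(n+4;\R)$ with $\R^\times\cdot O(S_1;\Q)$. Thus $R=c\,R'$ with $c\in\R^\times$ and $R'\in O(S_1;\Q)$, and the orthogonality relation $R^{tr}S_1R=S_1$ forces $c^2=1$, so $R\in O(S_1;\Q)$. Clearing a common integer denominator $N_0\in\N$ of the entries of $R$ finally gives $\sqrt{r}\,R\in\Z^{(n+4)\times(n+4)}$ with $r=N_0^2$, completing the proof.
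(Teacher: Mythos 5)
Part (a) is fine and matches the paper's argument in substance (finite covolume of $\widetilde{\Gamma}_S$ plus discreteness of $\Delta$). The first half of your part (b) — normal core, integrality of $RT_{m\lambda}R^{-1}$ and $RT^*_{m\lambda}R^{-1}$, passing to logarithms of the unipotents, and concluding that $\operatorname{Ad}(R)$ preserves $\mathfrak{so}(S_1;\Q)$ — is also sound. The gap is in the Schur's lemma step. The normalizer of $\mathfrak{so}(S_1;\Q)$ in $GL(n+4;\R)$ is \emph{not} $\R^\times\cdot O(S_1;\Q)$: since $\mathfrak{so}(S_1;\Q)$ generates $M_{n+4}(\Q)$ as a $\Q$-algebra, Skolem--Noether only gives $R=c\,h$ with $c\in\R^\times$ and $h\in GL(n+4;\Q)$, where $h$ is determined up to a \emph{rational} scalar. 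Then $c^2\,h^{tr}S_1h=S_1$ shows $h$ is a rational \emph{similitude} of $S_1$ with multiplier $1/c^2\in\Q_{>0}$, and there is no reason this multiplier is a square of a rational, so you cannot renormalize $h$ into $O(S_1;\Q)$ and cannot conclude $c^2=1$. Your final claim that $R$ is rational (equivalently that $r$ may be taken a perfect square) is in fact false in general: elements of the form $\tfrac{1}{\sqrt{r}}R''$ with $R''$ an integral similitude of multiplier $r$ not a square (Fricke/Atkin--Lehner type elements) do occur in discrete extensions of $\widetilde{\Gamma}_S$ when $L$ is not maximal even; this is exactly why the paper states only $\sqrt{r}R\in\Z^{(n+4)\times(n+4)}$, remarks that $R$ is rational only ``whenever $n$ is odd'' (via the determinant), and notes in Remark \ref{Remark_2} that $r\neq 1$ can happen. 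A proof that forces $R$ rational must therefore contain an error.

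The repair is short: from $R=c\,h$ with $h\in GL(n+4;\Q)$ and $c^2h^{tr}S_1h=S_1$ you get $c^2=p/q\in\Q_{>0}$ by comparing any nonzero entry, hence $\sqrt{pq}\,R=\pm p\,h$ is rational and $\sqrt{pqN^2}\,R\in\Z^{(n+4)\times(n+4)}$ after clearing the denominator $N$ of $h$; this yields the lemma as stated, with $r$ not necessarily a square. Note also that this whole route is considerably heavier than the paper's: the paper never leaves matrix arithmetic, using $R(T_{k\lambda}+T_{k\mu}-T_{k(\lambda+\mu)}-I)R^{-1}\in\Z^{(n+4)\times(n+4)}S_1$ to show that $k^2xx^{tr}$ is integral for the extreme columns and rows $x$ of $R$, then an elementary number-theoretic observation ($f^2,g^2,fg\in\Z$ implies $f,g\in\Z\sqrt{h}$) to produce the single irrationality $\sqrt{v}$, and finally $\rho^2S_1=(\rho R)^{tr}S_1(\rho R)$ to pin down $r=\rho^2\in\N$. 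That argument has the added benefit of producing $r$ explicitly from the $(1,1)$-entry of $R$, which is what Theorem \ref{theorem_2} and Remark \ref{Remark_2} later exploit.
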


\begin{proof}
 a) According to \cite{Bru2}, 4.10, the discriminant kernel $\widetilde{\Gamma}_S$ possesses a fundamental domain with respect to the action on the orthogonal half-space with finite, positive volume. As $\Delta$ is countable, the index $j$ must be finite.\\[1ex]
 b) Assume the notation \eqref{gl_2} for $R$. Multiplying $R$ by matrices of type \eqref{gl_5}, we may assume that $\alpha$, $\beta$, $\gamma$, $\delta$ are non-zero. Setting $k=j!$ we conclude
 \[
  R^k \in \widetilde{\Gamma}_S \;\; \text{for each}\;\; R\in\Delta.
 \]
This leads to
\[
 (RMR^{-1})^k = RM^k R^{-1} \in\widetilde{\Gamma}_S, \;\; \text{i.e.}\;\; R(M^k-I)R^{-1} \in \Z^{(n+4)\times(n+4)} S_1
\]
for all $M\in\widetilde{\Gamma}_S$. Using $M=T_\lambda$ in \eqref{gl_5} with $\lambda=(1,0,\ldots,0)^{tr}$ and $\mu = (0,\ldots,0,1)^{tr}$ we get 
\begin{align*}
 & RXR^{-1} \in \Z^{(n+4)\times(n+4)}S_1 \;\; \text{for}\;\; X=T_{k\lambda}+T_{k\mu}-T_{k(\lambda+\mu)}-I=k^2
 \begin{pmatrix}
  0 & 0 & 1 \\ 0 & 0 & 0  \\ 0 & 0 & 0
 \end{pmatrix},   \\
 & k^2 xx^{tr} \in \Z^{(n+4)\times (n+4)},
\end{align*}
whenever $x$ is the first column of $R$ or the last column of $R$, if we replace $T_\nu$ by $T^*_\nu$. Considering $R^{-1}$ instead of $R$ this remains true, if $x^{tr} S_1$ is the first or last row of $R$. Elementary number theory yields for $f,g\in\R$ with $f^2,g^2,fg\in\Z$ that $f,g\in\Z\sqrt{h}$ for some $h\in\N$.
If $k^2\alpha^2 = u^2 v$ with $u,v\in \N$, $v$ squarefree, we conclude that $\rho x$ is integral for all the vectors $x$ mentioned above, whenever
\begin{gather*}\tag{9}\label{gl_9} 
\rho = k/\sqrt{v}.
\end{gather*}
If we replace $R$ by $RT_\lambda$, $\lambda \in \Z^{n+2}$, we conclude that $\rho K$, hence $\rho R$ is integral, too. 
Then 
\[
 \rho^2S_1 = (\rho R)^{tr} S_1(\rho R) \in \Z^{(n+4)\times (n+4)}
\]
leads to $\rho^2\in \N$.
\end{proof}

Due to the determinantal condition, $R$ in Lemma \ref{lemma_1} is a rational matrix, whenever $n$ is odd.

\begin{corollary}\label{corollary_2} 
Let $L_1 = \Z^{n+4}$ satisfy \eqref{gl_1}. Then the normalizer of $\widetilde{\Gamma}_S$ in $SO^+(S_1;\R)$ is equal to $\Gamma_S$.
\end{corollary}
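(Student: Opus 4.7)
My plan for proving Corollary~\ref{corollary_2} is to establish both inclusions $\Gamma_S \subseteq N(\widetilde{\Gamma}_S)$ and $N(\widetilde{\Gamma}_S) \subseteq \Gamma_S$, where $N(\widetilde{\Gamma}_S)$ denotes the normalizer in $SO^+(S_1;\R)$.

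The inclusion $\Gamma_S \subseteq N(\widetilde{\Gamma}_S)$ is a short calculation. Given $R \in \Gamma_S$ and $M = I + A S_1 \in \widetilde{\Gamma}_S$ with $A \in \Z^{(n+4) \times (n+4)}$, the inverse formula $R^{-1} = S_1^{-1} R^{tr} S_1$ from \eqref{gl_3} yields
\[ RMR^{-1} = I + (RAR^{tr}) S_1, \]
and $RAR^{tr}$ is integer, so $RMR^{-1} \in \widetilde{\Gamma}_S$.

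For the reverse inclusion, take $R \in N(\widetilde{\Gamma}_S)$. After verifying that $\Delta := \langle \widetilde{\Gamma}_S, R \rangle$ is discrete (by the standard argument that a suitable positive power $R^k$ lies in $\widetilde{\Gamma}_S$, exploiting that the normalizer modulo $\widetilde{\Gamma}_S$ acts on finitely many combinatorial invariants such as the cusp orbit set $\Ccal^0(\widetilde{\Gamma}_S)$), Lemma~\ref{lemma_1}(b) yields $r \in \N$ with $R' := \sqrt{r}\,R \in \Z^{(n+4)\times(n+4)}$, and I choose $r$ minimal so that $\gcd$ of the entries of $R'$ is $1$. The orthogonality $R^{tr} S_1 R = S_1$ becomes $R' S_1^{-1} R'^{tr} S_1 = r I$, equivalently $R'^{-1} = \tfrac{1}{r} S_1^{-1} R'^{tr} S_1$. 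For any $M = I + B S_1 \in \widetilde{\Gamma}_S$ with $B \in \Z^{(n+4)\times(n+4)}$, the normalization condition rewrites as
\[ RMR^{-1} = I + \tfrac{1}{r}(R'BR'^{tr}) S_1, \]
so the condition $RMR^{-1} \in \widetilde{\Gamma}_S$ forces $r \mid R'BR'^{tr}$ entrywise.

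Specializing $M$ to $T_\lambda$, $T^*_\lambda$ for $\lambda$ in a $\Z$-basis of $\Z^{n+2}$ and $M = J$ from \eqref{gl_4}--\eqref{gl_5}, and computing the associated integer matrices $B = (M - I) S_1^{-1}$, the divisibilities $r \mid R'BR'^{tr}$ translate into divisibility by $r$ of every $2 \times 2$ minor of $R'$. Combined with $\det R' = \pm r^{(n+4)/2}$ and $\gcd(R') = 1$, an elementary-divisor (Smith normal form) argument forces $r = 1$. Hence $R = R' \in SO^+(S_1;\R) \cap \Z^{(n+4)\times(n+4)} = \Gamma_S$. The main obstacle is precisely this last step: I must verify that the $B$-matrices coming from the three generator families $T_\lambda, T^*_\lambda, J$ supply enough independent divisibility conditions to cover all $2 \times 2$ minors, and then balance these against $\det R' = \pm r^{(n+4)/2}$ and the $\gcd$ normalization via Smith normal form to exclude $r > 1$.
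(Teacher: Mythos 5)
Your first inclusion $\Gamma_S \subseteq N(\widetilde{\Gamma}_S)$ is fine, and your overall strategy for the reverse inclusion (scale $R$ to an integral matrix and force the scaling factor to be $1$) is in the right spirit, but there are two genuine gaps. First, you invoke Lemma~\ref{lemma_1}(b) for $\Delta = \langle\widetilde{\Gamma}_S, R\rangle$, which requires $\Delta$ to be discrete, and your justification --- that the normalizer acts on finitely many invariants such as $\Ccal^0(\widetilde{\Gamma}_S)$, whence $R^k \in \widetilde{\Gamma}_S$ for some $k$ --- does not work: fixing every cusp orbit does not place an element in $\widetilde{\Gamma}_S$, and without discreteness already in hand there is no finite index with which to bound $k$. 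Since discreteness of the normalizer is essentially the content of the corollary, this is close to circular. The paper sidesteps the issue entirely: the only role discreteness plays in the proof of Lemma~\ref{lemma_1}(b) is to produce an exponent $k$ with $(RMR^{-1})^k \in \widetilde{\Gamma}_S$; for $R$ in the normalizer one has $RMR^{-1}\in\widetilde{\Gamma}_S$ outright, so one simply reruns that proof with $k=1$ and never needs $\Delta$ to be discrete.

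Second, the step you yourself flag as the main obstacle --- that the matrices $B=(M-I)S_1^{-1}$ for $M = T_\lambda, T_\lambda^*, J$ force $r$ to divide every $2\times 2$ minor of $R'$ --- is not established, and it is not clear it can be extracted from those generators. What the relations $r \mid R'BR'^{tr}$ with the rank-one $B$ (e.g.\ $B$ coming from $T_{\lambda} + T_{\mu} - T_{\lambda+\mu}-I$, which is a multiple of a single matrix unit) actually give is $r\mid x_ix_j$ for $x$ a fixed column (or $S_1$-twisted row) of $R'$, i.e.\ products of entries \emph{within one column}, not $2\times 2$ minors across two columns. The paper's argument works with exactly these within-column products: from $x_i^2,\, x_ix_j \in \Z$ (for the unscaled $R$) one gets $x_i \in \Z\sqrt{v}$ for a common squarefree $v$, hence $\tfrac{1}{\sqrt v}R$ integral after sweeping over the remaining columns via $RT_\lambda$, and then $\det R = 1$ forces $v=1$ directly --- no Smith normal form is needed. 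Your elementary-divisor endgame is valid as conditional reasoning, but the divisibility input it requires is precisely what is missing; you should either prove the minor divisibility or replace it by the entrywise argument of Lemma~\ref{lemma_1} specialized to $k=1$.
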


\begin{proof}
 Clearly $\widetilde{\Gamma}_S$ is a normal subgroup of $\Gamma_S$. Given $R$ in the mormalizer of $\widetilde{\Gamma}_S$ in 
 $SO^+(S_1;\R)$, we conclude from the proof of Lemma \ref{lemma_1} and in particular from \eqref{gl_9} with $k=1$ that $\frac{1}{\sqrt{v}} R$ is integral. Then $\det R = 1$ leads to $v=1$ and therefore $R\in\Gamma_S$.
\end{proof}

Next we consider the particular case of maximal even lattices. 
\begin{theorem}\label{theorem_3} 
 Let $L_1=\Z^{n+4}$ be a maximal even lattice satisfying \eqref{gl_1}. Then $\Gamma_S$ is the uniquely determined maximal discrete extension of $\widetilde{\Gamma}_S$ in $SO^+(S_1;\R)$ and coincides with the normalizer of $\widetilde{\Gamma}_S$ in $SO^+(S_1;\R)$.
\end{theorem}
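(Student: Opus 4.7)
The plan is to show that every discrete subgroup $\Delta$ of $SO^+(S_1;\R)$ containing $\widetilde{\Gamma}_S$ is already contained in $\Gamma_S$. Since $\Gamma_S$ is itself a discrete extension of $\widetilde{\Gamma}_S$, this would make $\Gamma_S$ the unique largest such extension, and its identification with the normalizer is already supplied by Corollary \ref{corollary_2}.

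Given $R\in\Delta$, I would first invoke Lemma \ref{lemma_1}(b) to obtain $r\in\N$ with $\sqrt{r}\,R\in\Z^{(n+4)\times(n+4)}$, and then apply Theorem \ref{theorem_2}(b) — this is the step where the maximal even hypothesis enters — to produce $\widetilde{\gamma}_1,\widetilde{\gamma}_2\in\widetilde{\Gamma}_S$ with
\[
 D:=\widetilde{\gamma}_1 R\widetilde{\gamma}_2=\tfrac{1}{\sqrt{r}}R^*,\qquad R^*=\diag(\alpha^*,K^*,\delta^*),\;\alpha^*\delta^*=r,\;\tfrac{1}{\alpha^*}R^*\in\Z^{(n+4)\times(n+4)}.
\]
Since $D\in\Delta$, it suffices to prove $D\in\Gamma_S$; concretely, that $\alpha^*=\delta^*$.

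The crux is to extract $\alpha^*=\delta^*$ from discreteness of $\Delta$. I would introduce the translation set
\[
 M:=\{\mu\in\R^{n+2};\,T_\mu\in\Delta\},
\]
which is an additive subgroup of $\R^{n+2}$ via $T_\mu T_\nu=T_{\mu+\nu}$. Because the image of $\mu\mapsto T_\mu$ is a closed subgroup of $SO^+(S_1;\R)$ homeomorphic to $\R^{n+2}$ and $\Delta$ is discrete, $M$ is a discrete subgroup of $\R^{n+2}$; and since $T_\lambda\in\widetilde{\Gamma}_S$ for $\lambda\in\Z^{n+2}$, we have $\Z^{n+2}\subseteq M$, so $M$ is a full-rank $\Z$-lattice. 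A direct computation using $K^{*tr}S_0K^*=rS_0$ yields
\[
 D\,T_\lambda\,D^{-1}=T_{(K^*/\delta^*)\lambda}\qquad(\lambda\in\R^{n+2}),
\]
and since $\Delta$ is stable under conjugation by $D\in\Delta$, both $K^*/\delta^*$ and its inverse $\delta^*K^{*-1}$ preserve $M$. Hence $K^*/\delta^*$ restricts to an automorphism of $M\cong\Z^{n+2}$, so $\det(K^*/\delta^*)=\pm 1$. Taking determinants in $K^{*tr}S_0K^*=rS_0$ gives $(\det K^*)^2=r^{n+2}$, whence $(\det(K^*/\delta^*))^2=(\alpha^*/\delta^*)^{n+2}=1$, and positivity of $\alpha^*,\delta^*$ forces $\alpha^*=\delta^*$.

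With $\alpha^*=\delta^*=\sqrt{r}$, the matrix $D=\diag(1,K^*/\alpha^*,1)$ is integral — because $\alpha^*$ divides every entry of $K^*$ — and lies in $SO^+(S_1;\R)$, so $D\in\Gamma_S$. Therefore $R\in\widetilde{\Gamma}_S D\widetilde{\Gamma}_S\subseteq\Gamma_S$, proving $\Delta\subseteq\Gamma_S$. The step I expect to demand the most care is the discreteness argument itself: one has to verify that the translations form a closed subgroup of $SO^+(S_1;\R)$ that is homeomorphic to $\R^{n+2}$, so that $M$ is a genuine $\Z$-lattice upon which $K^*/\delta^*$ acts by a $\Z$-linear automorphism. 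Once this is secured, the determinantal identity $(\alpha^*/\delta^*)^{n+2}=1$ is the decisive numerical input.
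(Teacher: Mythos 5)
Your argument is correct, and its skeleton (Lemma \ref{lemma_1}(b) plus Theorem \ref{theorem_2}(b) to reduce to a block-diagonal representative $D=\tfrac{1}{\sqrt{r}}\diag(\alpha^*,K^*,\delta^*)$, then force $\alpha^*=\delta^*$) matches the paper's. Where you genuinely diverge is the decisive step $\alpha^*=\delta^*$: the paper simply observes that if $\alpha=\alpha^*/\sqrt{r}<1$ then the right cosets $\widetilde{\Gamma}_S D^m$, $m\in\Z$, are pairwise distinct (since $D^{m}\in\widetilde{\Gamma}_S$ would force the integer $(1,1)$-entry $\alpha^{m}$ and $(n+4,n+4)$-entry $\alpha^{-m}$ to both be integral, hence $m=0$), contradicting the finiteness of $[\Delta:\widetilde{\Gamma}_S]$ from Lemma \ref{lemma_1}(a). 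You instead conjugate the translations $T_\lambda$ by $D$, check that the set $M=\{\mu;\,T_\mu\in\Delta\}$ is a full-rank lattice preserved by $K^*/\delta^*$ and its inverse, and extract $\left(\alpha^*/\delta^*\right)^{n+2}=1$ from $\det(K^*/\delta^*)=\pm1$ and $K^{*tr}S_0K^*=rS_0$; I checked the identity $DT_\lambda D^{-1}=T_{(K^*/\delta^*)\lambda}$ and the determinant bookkeeping, and they hold. The paper's route is shorter and reuses the finite-index statement it already has; yours replaces that global input by a local discreteness argument (discreteness of $\Delta$ along the closed unipotent subgroup $\{T_\mu\}$) together with a structural reason ($K^*/\delta^*$ acts as a lattice automorphism), at the cost of having to justify that $M$ really is a $\Z$-lattice of full rank --- which you correctly identify as the delicate point and which does go through. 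Both proofs ultimately rest on Lemma \ref{lemma_1}, since your use of Theorem \ref{theorem_2}(b) still requires the integrality $\sqrt{r}\,R\in\Z^{(n+4)\times(n+4)}$ from part (b) of that lemma.
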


\begin{proof}
Let $\Delta$ be a discrete subgroup of $SO^+(S_1;\R)$, which contains $\widetilde{\Gamma}_S$.
 Due to Lemma \ref{lemma_1} and Theorem \ref{theorem_2} we may assume 
 \[
  R = \begin{pmatrix}
       \alpha & 0 & 0 \\ 0 & K & 0 \\ 0 & 0 & \delta
      \end{pmatrix} \in\Delta, \;\;
  0<\alpha\leq \delta,\; \alpha\delta=1,\;\; \tfrac{1}{\alpha} R\in\Z^{(n+4)\times(n+4)}.
 \]
If $0<\alpha < 1$ the right cosets $\widetilde{\Gamma}_S R^m$, $m\in \Z$, are mutually different. This contradicts Lemma \ref{lemma_1}. Thus $\alpha = 1$ and $R$ is integral, i.e. $R\in\Gamma_S$. As $\Gamma_S$ is clearly a discrete group, it is the unique maximal discrete extension of $\widetilde{\Gamma}_S$ and coincides with the normalizer due to Corollary \ref{corollary_2}.
\end{proof}

Non-maximal lattices are described in the following Remark.

\begin{remark}\label{Remark_2}  
a) If $L$ is not maximal even, $r$ in Lemma \ref{lemma_1} is not always equal to $1$. But one can proceed along the proof of Lemma 4 in \cite{KRaW} in order to show that $r$ is always a divisor of $(\det S)^m$ for some $m\in \N$. Now consider Remark \ref{Remark_1}. 
Hence $\widehat{H}^{-1} \Gamma_{S^*} \widehat{H}$ is a maximal discrete extension of $\widetilde{\Gamma}_S$ due to Theorem \ref{theorem_3}, which is neither contained in $SL(n+4;\Z)$ nor in the normalizer of $\Gamma_S$ or $\widetilde{\Gamma}_S$ due to Corollary \ref{corollary_2}. Note that this maximal discrete extension does not contain $\Gamma_S$ in general.
As a maximal even overlattice is not unique in general (cf. \cite{Ni}), we conclude that a maximal discrete extension $\widetilde{\Gamma}_S$ is not uniquely determined in general. More precisely any maximal discrete extension is equal to the normalizer $\Gamma_S$, if and only if the underlying lattice $L$ is maximal even. \\[1ex]
b)  Lemma \ref{lemma_1} remains true, if we replace the discriminant kernel by an arbitrary congruence subgroup of $\Gamma_S$ .
\end{remark}

\section{Root Lattices}

In this section we deal with root lattices, as they yield the most common examples of Borcherds products (cf. \cite{WW2}). We quote \cite{CS}, Chap. 4, and \cite{Eb}, 1.4, for details.  

The lattice $A_n$ is given by 
\[
 A_n:= \bigl\{\lambda \in\Z^{n+1};\; \lambda_1+\ldots +\lambda_{n+1} = 0\bigr\}, \;\; \disc A_n=n+1.
\]
The discriminant group is cyclic of order $n+1$
\[
 A^\sharp_n/A_n = \langle a_n+A_n\rangle, \;\; a_n=\tfrac{1}{n+1}(ne_1-e_2-\ldots -e_{n+1}), \;\; \overline{q}(a_n+A_n) = \tfrac{n}{2(n+1)} +\Z.
\]
Hence $(A^\sharp_n/A_n,\overline{q})$ is anisotropic if and only if
\[
 8 \nmid(n+1) \;\;\text{and}\;\; p^2\nmid(n+1)\;\;\text{for each odd prime}\; p.
\]
If $A_n$ is not maximal, its maximal discrete extension is uniquely determined and generated by $A_n$ and $j a_n$ with $nj^2 \equiv 0 \pmod{2(n+1)}$. It is equal to 
\[
 \bigl\{\lambda \in A^\sharp_n;\; \langle \lambda,\lambda\rangle \in 2\Z\bigr\}.
\]
The lattice $D_n$ is given by
\[
 D_n = \bigl\{\lambda\in\Z^n;\; \lambda_1+\ldots +\lambda_n\equiv 0\!\!\!\!\pmod{2}\bigr\}, \;\; \disc D_n=4.
\]
If $n$ is odd, $D_n$ is maximal even due to Remark \ref{Remark_1}. If $n$ is even, $D^\sharp_n/D_n$ is a Kleinian $4$-group, where the values of $\overline{q}$ are given by
\[
 \Z,\;\; \tfrac{1}{2}+\Z,\;\; \tfrac{n}{8}+\Z,\;\; \tfrac{n}{8}+\Z.
\]
Hence $D_n$ is maximal even, if and only if $8\nmid n$. If $8 \mid n$, the lattice $D^+_n$ generated by $D_n$ and $h_n=\frac{1}{2}(e_1 +\ldots + e_n)$ is unimodular and a maximal even overlattice of $D_n$.
\medskip

$E_8 = D^+_8$ is unimodular. $E_7 = \langle e_7-e_8\rangle^\perp$ has $\disc E_7=2$. $E_6 =  \langle e_6-e_7,e_7-e_8\rangle^\perp$ satisfies $\disc E_6=3$. Hence they are maximal even due to Remark \ref{Remark_1}.
\medskip

Summarizing we have

\begin{lemma}\label{lemma_2}  
 A complete list of maximal even irreducible root lattices is given by \\[0.5ex]
a) $A_n$, if $n$ is even and $n+1$ is squarefree or \\
 \hspace*{7ex} if $n$ is odd and $(n+1)/2$ is squarefree.\\[0.5ex]
b) $D_n$, if $n$ is not a multiple of $8$.\\[0.5ex]
c) $E_6$, $E_7$, $E_8$.
\end{lemma}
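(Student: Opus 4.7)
The plan is to apply Corollary \ref{corollary_1}---which characterizes maximal evenness as anisotropy of $(L^\sharp/L,\overline{q})$---to each family of irreducible root lattices, combining it with Remark \ref{Remark_1} and the explicit discriminant-form data already tabulated above the statement. The classification of irreducible root lattices reduces the proof to treating $A_n$, $D_n$, and $E_6,E_7,E_8$ in turn.

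For $A_n$ the discriminant group is cyclic of order $n+1$ with generator $a_n+A_n$ satisfying $\overline{q}(j a_n+A_n)=j^2 n/(2(n+1))+\Z$, so anisotropy amounts to the non-solvability of $2(n+1)\mid j^2 n$ for $j\not\equiv 0\pmod{n+1}$. Using $\gcd(n,n+1)=1$, I would split by parity of $n$: for $n$ even one has $\gcd(n,2(n+1))=2$, so the condition collapses to $(n+1)\nmid j^2$ for all nonzero $j\pmod{n+1}$, equivalently $n+1$ squarefree; for $n$ odd one has $\gcd(n,2(n+1))=1$, and writing $n+1=2^a m$ with $m$ odd and decomposing $j=2^b j'$ with $j'$ odd, the implication ``$2^{a+1}m\mid j^2\Rightarrow 2^a m\mid j$'' holds universally iff $a\leq 2$ and $m$ is squarefree, which is exactly ``$(n+1)/2$ squarefree.'' This yields part (a).

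For $D_n$ with $n$ odd, Remark \ref{Remark_1} applies since $(\det S)/2=2$ is squarefree; for $n$ even, the Klein four-group values $0,\tfrac{1}{2},\tfrac{n}{8},\tfrac{n}{8}\pmod{\Z}$ recorded above show anisotropy fails exactly when $\tfrac{n}{8}\in\Z$, i.e.\ $8\mid n$, giving part (b). Part (c) is immediate: $E_8$ is unimodular, $E_6$ has squarefree discriminant $3$, and $E_7$ has odd rank with $(\disc E_7)/2=1$ squarefree, so Remark \ref{Remark_1} covers both $E_6$ and $E_7$. The only non-routine step is the $2$-adic/odd-part bookkeeping for $A_n$ with $n$ odd; every other case is a direct reading of Corollary \ref{corollary_1}, Remark \ref{Remark_1}, and the discriminant data of Section 4.
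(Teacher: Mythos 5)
Your proposal is correct and follows essentially the same route as the paper: the lemma is obtained there by combining Corollary \ref{corollary_1} (maximal even $\Leftrightarrow$ anisotropic discriminant form), Remark \ref{Remark_1}, and the explicit discriminant-form data for $A_n$, $D_n$, $E_6$, $E_7$, $E_8$ listed just before the statement. The only difference is that you spell out the $2$-adic bookkeeping showing that anisotropy of $A_n^\sharp/A_n$ is equivalent to the stated squarefreeness conditions, which the paper records without detailed verification; your computation is accurate.
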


Clearly one can deal with arbitrary root lattices on this basis. There exists a unique overlattice, which is maximal even, for instance, whenever the discriminant group is cyclic, as pointed out for $A_n$. But $D_n$ for $8 \mid n$ has got two different maximal even overlattices, which are both isometric to $D_n^+$.

\begin{example}\label{example_2} 
 a) Let $L=4 A_1$. Then $\Gamma_S$, $S=2I^{(4)}$, corresponds to the extended modular group over the Lipschitz quaternions and admits a unique maximal discrete extension, which is given by $L^*=D_4$ and corresponds to the extended modular group over the Hurwitz quaternions (cf. \cite{H-W}).  \\[1ex]
 b) Let $L=5 A_1$, $S=2I^{(5)}$. Then there are $5$ maximal even overlattices given by $\Z^5 + \Z h_j$, $h_j= \frac{1}{2}(h-e_j)$, $ j= 1,\ldots,5$, where $h= (1,1,1,1,1)^{tr}$. The associated maximal discrete extensions of $\widetilde{\Gamma}_S$ in Remark \ref{Remark_2} are isomorphic.  \\ 
 But there is another maximal discrete extension given by $\Gamma_S$, which is not isomorphic to the other ones. This can be proved in a similar way as in \cite{H-WK}, as one can restrict to matrices with denominator $2$ in the maximal discrete extension and uses the fact that the matrices 
 $\left(\begin{smallmatrix}
         I & 0 & 0 \\ 0 & P & 0 \\ 0 & 0 & I
        \end{smallmatrix}\right)$,
 $P\in SO(I^{(5)};\Z)$, belong to $\Gamma_S$.
\end{example}
\medskip

The authors would like to thank Gabriele Nebe for helpful discussions.

\nocite{WW1} \nocite{Wa}


\bibliography{bibliography_krieg_2021.bib} 
\bibliographystyle{plain}

\end{document}